\newtheorem{theorem}{Theorem}[section]
\newtheorem{lemma}[theorem]{Lemma}
\newtheorem{corollary}[theorem]{Corollary}
\theoremstyle{definition}
\newtheorem{df}{Definition}
\newtheorem{remark}[df]{Remark}
\newcommand{\I}{\mathcal I}
\newcommand{\N}{\mathbb N}
\newcommand{\card}{\operatorname{card}}
\author{Micha{\l} Pop\l awski}
\address{Institute of Mathematics, University of Silesia,
Bankowa 14, 40-007 Katowice, Poland}
\email {michal.poplawski@us.edu.pl}
\title[Subseries and rearrangements]{Ideal boundedness of subseries and rearrangements in Banach spaces \\ vs Banach spaces possessing a copy of $c_0$}
\date{}
\subjclass[2010]{40A05, 40A35, 46B25, 46B45, 54E52} 
\keywords{series in Banach spaces, ideal convergence, ideal boundedness, Baire category, subseries}
\date{}
\begin{document}
\begin{abstract}
Suppose that $X$ is a Banach space. We will show that $X$ does not contain a copy of $c_0$ if and only if for each series which is not unconditionally convergent in $X$ respective sets coding all bounded subseries and rearrangements are meager. We use Bessaga-Pe{\l}czy{\'n}ski $c_0-$Theorem and concept of uniformly unconditionally bounded series. Moreover we prove similar result for the ideal boundedness for a class of Baire ideals using Talagrand's characterisation.
\end{abstract}

\maketitle
\section{Introduction}

Denote by $Fin$ the family of all finite subsets of $\N$. Recall that an ideal $\I$ on $\N$ is the nonempty subfamily of the family $P(\N)$ such that
\begin{enumerate}
\item $\emptyset \in \I,$
\item $A,B \in \I \Rightarrow A \cup B \in \I,$
\item $A \subset B \wedge B \in \I \Rightarrow A \in \I,$
\item $\N \notin \I,$
\item $Fin \subset \I,$
\end{enumerate} 
where the last two conditions are not always included in the definition of ideal, but they are often considered as useful properties.  

Since $\I \subset P(\N)$ and $P(\N)=\{0,1\}^{\N}$ has natural topological structure as product space of discrete spaces $\{0,1\}$,  then we may investigate topological properties of ideals. Especially, recall that a subset $A$ of a metric space $X$ has the Baire property iff $A$ is symmetric difference of a meager set $N$ and an open set $U$. Family of these sets constitutes a $\sigma-$algebra on $X$.
The following result (due to Jalali-Naini and Talagrand; see \cite{JN}, \cite{Ta}) gives a useful characterization of
ideals with the Baire property.
\begin{lemma}\label{Tal}
An ideal $\I$ on $\N$ has the Baire property if and only if
there is an infinite sequence $n_1<n_2<\dots$ in $\N$ such that no member of $\I$ contains infinitely many intervals $[n_i,n_{i+1})\cap\N$.
\end{lemma}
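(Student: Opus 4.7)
For the easy direction ($\Leftarrow$), the plan is to show that the set
$\mathcal{C} := \{A \in 2^\N : A \supset [n_i, n_{i+1}) \cap \N \text{ for infinitely many } i\}$
is a dense $G_\delta$ subset of $2^\N$. Writing $\mathcal{C} = \bigcap_N \bigcup_{i \ge N} S_i$ with $S_i := \{A : A \supset [n_i, n_{i+1}) \cap \N\}$ a basic clopen cylinder, each union is open, and density holds because any $s \in 2^{<\N}$ can be extended by placing ones on an interval $[n_i, n_{i+1})$ with $n_i > |s|$. The hypothesis is exactly $\I \cap \mathcal{C} = \emptyset$, so $\I$ is meager and in particular has the Baire property.

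For the hard direction ($\Rightarrow$), I will first reduce from Baire to meager via a zero-one law. Since $A \triangle B \subset A \cup B \in \I$ whenever $A, B \in \I$, the ideal is a subgroup of the Polish group $(2^\N, \triangle)$. If $\I$ had the Baire property but were non-meager, Pettis's theorem applied to this subgroup would force $\I = \I \triangle \I$ to contain a basic neighborhood $\{A : A \cap [0,m) = \emptyset\}$ of the identity $\emptyset$ for some $m$; then $[m, \infty) \in \I$ together with $[0, m) \in Fin \subset \I$ would give $\N \in \I$, contradicting $\N \notin \I$.

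With $\I$ meager, I will construct $(n_i)$ from a cover $\I \subset \bigcup_k C_k$ where the $C_k$ are closed, nowhere dense, increasing, and (after a preprocessing step) hereditary, so that $V_k := 2^\N \setminus C_k$ is open, dense, and upward closed. Inductively, given $n_k$, density of $V_k$ produces $A \in V_k$ with $A \cap [0, n_k) = \emptyset$ and a basic clopen $[t] \subset V_k$ containing $A$; setting $n_{k+1} := |t|$, the ``one-trace'' $A \cap [n_k, n_{k+1})$ lies in $[t] \subset V_k$, and upward closure then gives $[n_k, n_{k+1}) \in V_k$, i.e., $[n_k, n_{k+1}) \notin C_k$. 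Now if $A \in \I$ and $A \in C_{k_0}$, then for $i \ge k_0$ hereditariness of $C_i$ forces $[n_i, n_{i+1}) \in C_i$ whenever $[n_i, n_{i+1}) \subset A$, contradicting the construction; hence $A$ contains only finitely many intervals.

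The principal obstacle will be the hereditary reduction of the cover, since the hereditary closure of a closed nowhere dense set need not be nowhere dense (it equals $2^\N$ if the set contains $\N$, and can have nonempty interior in milder cases). I expect to overcome this by first passing to a $\triangle Fin$-invariant meager $F_\sigma$ superset of $\I$, which exists because $\I$ is closed under finite symmetric differences (apply $\bigcup_{F \in Fin}(M \triangle F)$ to any meager $F_\sigma$ cover $M$), and then using hereditariness of $\I$ to trim the closed pieces so as to exclude cofinite sets. This is the step where the specific algebraic structure of an ideal, beyond mere Baire property, enters the argument.
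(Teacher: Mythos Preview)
The paper does not prove this lemma; it is quoted with attribution to Jalali-Naini and Talagrand and used as a black box throughout. So there is no in-paper argument to compare against, but your proposal can still be assessed on its own terms.

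Your backward direction and the Pettis reduction from the Baire property to meagerness are correct and standard, and your inductive construction of the $n_k$ is clean \emph{provided} the cover $\I\subset\bigcup_k C_k$ really consists of closed, nowhere dense, increasing, \emph{hereditary} sets. The gap is exactly the step you flag: producing such a cover. Your suggested fix --- pass to a $\triangle\,Fin$-invariant meager $F_\sigma$ superset and then ``trim the closed pieces so as to exclude cofinite sets'' --- does not go through as stated. The $\triangle\,Fin$-invariance is a property of the union $M$, not of its individual closed pieces, so it gives no control over them; and deleting the countably many cofinite points from a closed $F_n$ leaves only a $G_\delta$ set, with no evident way to rewrite it as a union of closed nowhere dense sets each of which still avoids every cofinite set. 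In fact the existence of a hereditary closed nowhere dense cover of a meager ideal is \emph{equivalent} to the Talagrand criterion itself (given intervals $I_k$, take $C_n=\{A:\forall k\ge n,\ I_k\not\subset A\}$; conversely, run your construction), so isolating it as a preprocessing step is circular unless you have an independent proof of it.

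The textbook argument sidesteps this entirely. Keep an arbitrary increasing closed nowhere dense cover $\I\subset\bigcup_k F_k$, and at stage $k$ choose, for \emph{every} $s\in 2^{n_k}$ (not only $0^{n_k}$), an extension $\sigma_{k,s}\in 2^{n_{k+1}}$ with $[\sigma_{k,s}]\cap F_k=\emptyset$; this fixes $n_{k+1}$. If some $A\in F_{k_0}$ contained infinitely many intervals $I_{k_j}=[n_{k_j},n_{k_j+1})$, build $B$ by leaving $A$ unchanged off $\bigcup_j I_{k_j}$ and, on each $I_{k_j}$, replacing $A\upharpoonright I_{k_j}$ (which is all $1$'s, since $I_{k_j}\subset A$) by the tail of $\sigma_{k_j,\,B\upharpoonright n_{k_j}}$. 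Then $B\subset A$, so $B\in\I$ by hereditariness of $\I$ itself; but $B\in[\sigma_{k_j,\,B\upharpoonright n_{k_j}}]$ for every $j$ forces $B\notin F_{k_j}$ for arbitrarily large $k_j$, hence $B\notin\bigcup_k F_k\supset\I$, a contradiction. The point is that hereditariness is invoked once, at the end, on $\I$, rather than imposed on the cover in advance.
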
 
An important example of an ideal distinct than $Fin$ and having the Baire property is the density ideal $\I_d:=\left\{A \subset \N \colon \frac{\card(A \cap \{1,\ldots,n\})}{n} \to 0\right\}.$

We say that a sequence $(x_n)_n$ in a normed space is $\I-$bounded if there is $M>0$ with $\{n \in \N \colon \|x_n\|>M\} \in \I$. Evidently if we set $\I=Fin$ then we get usual notion of boundedness of a sequence. Moreover it can be shown that $\I-$convergent sequence is $\I-$bounded, too (\cite{KMSS}, Theorem 2.2, by modification).  

Put $S:=\left\{s \in \N^{\N} \colon s \textrm{ is increasing }\right\}, P:=\left\{p \in \N^{\N} \colon p \textrm{ is a bijection }\right\}.$ It is easy to see that both sets are polish spaces, alike $\{0,1\}^{\N}.$ 
If $\sum x_n$ is a series in a normed space, than we may think that subseries of $\sum x_n$ is generated by a sequence $t \in \{0,1\}^{\N}$ or by a sequence $s \in S$ in such fashions: $\sum t(n)x_n, \sum x_{s(n)}$, likewise $\sum x_{p(n)}$ is rearrangement if $p \in P.$

In this paper we are interested in ideal boundedness, hence we will only mention that following theorem has its equivalents in a case of ideal convergence in \cite{BPW, BPW2}.
\begin{theorem}(\cite{BPW}, Theorem 4.1, Corollary 4.2) 
Suppose that $\sum x_n$ is a series which is not unconditionally convergent in a finite-dimensional Banach space $X$. Let $\I$ be an ideal with the Baire property. Then the sets
$$E(\I,(x_n))=\left\{s \in S \colon \left(\sum_{i=1}^n x_{s(i)}\right)_n \textrm{ is } \I-\textrm{bounded} \right\},$$
$$F(\I,(x_n))=\left\{p \in P \colon \left(\sum_{i=1}^n x_{p(i)}\right)_n \textrm{ is } \I-\textrm{bounded} \right\},$$
are meager in $S$ and $P$, respectively.
\end{theorem}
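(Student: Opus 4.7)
The plan is to use that in a finite-dimensional Banach space unconditional and absolute convergence coincide, in order to reduce the problem to a scalar inequality along a single linear functional, and then marry Lemma~\ref{Tal} with a direct Baire category argument on $S$ and $P$.

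\textbf{Setting up the reduction.} I would first argue that the hypothesis forces $\sum_n \|x_n\|=\infty$, and hence that there is a functional $f\in X^*$ with $\|f\|\le 1$ and $\sum_n|f(x_n)|=\infty$ (take a basis of $X$ and a coordinate along which absolute divergence holds). Splitting $\N=A^+\cup A^-$ by the sign of $f(x_n)$, one of $\sum_{n\in A^+} f(x_n)$, $-\sum_{n\in A^-} f(x_n)$ must be $+\infty$; after relabelling assume $\sum_{n\in A^+} f(x_n)=+\infty$, so $A^+$ is infinite. Next, with $(n_i)$ supplied by Lemma~\ref{Tal} and $S_n^s:=\sum_{k=1}^n x_{s(k)}$, I would decompose
\[E(\I,(x_n))=\bigcup_{M\in\N} E_M,\qquad E_M=\{s\in S\colon\{n\in\N\colon\|S_n^s\|>M\}\in\I\},\]
and observe that, by Lemma~\ref{Tal}, $s\in E_M$ forces the existence of $I\in\N$ such that every block $[n_i,n_{i+1})$ with $i\ge I$ contains some $n$ with $\|S_n^s\|\le M$. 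Hence $E_M\subseteq\bigcup_I G_I^M$, where
\[G_I^M=\{s\in S\colon\forall\, i\ge I,\ \exists\, n\in[n_i,n_{i+1}),\ \|S_n^s\|\le M\}\]
is closed as an intersection of clopen cylinder conditions, and the task reduces to showing each $G_I^M$ is nowhere dense in $S$.

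\textbf{The key construction.} Given any basic clopen cylinder $V\subseteq S$ prescribing $s(1)=a_1<\cdots<a_N$, I would pump the partial sum past $M$ using only $A^+$-indices. Concretely, with $T=\sum_{k\le N} x_{a_k}$, pick $b_1<\cdots<b_L$ in $A^+$, all larger than $a_N$, such that $f(T)+\sum_{j\le L} f(x_{b_j})>M+1$; this is possible because $\sum_{n\in A^+} f(x_n)=+\infty$. Now take any $i\ge I$ with $n_i\ge N+L$ and define $s^*\in V$ by setting $s^*(N+j)=b_j$ for $1\le j\le L$ and then appending further $A^+$-indices in increasing order. Since every appended term contributes nonnegatively to $f$, we get $f(S_n^{s^*})>M+1$ for all $n\ge N+L$, hence $\|S_n^{s^*}\|\ge f(S_n^{s^*})>M$ throughout $[n_i,n_{i+1})$, exhibiting $s^*\in V\setminus G_I^M$. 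This proves $G_I^M$ has empty interior, and $E(\I,(x_n))$ is meager.

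\textbf{Rearrangements and the hard part.} For $F(\I,(x_n))$ I would run the same scheme in $P$: on a basic cylinder prescribing distinct $p(1),\ldots,p(N)$, fill positions $N+1,\ldots,n_{i+1}$ with fresh $A^+$-indices as above (which yields the identical $f$-estimate on the block $[n_i,n_{i+1})$), and then enumerate all remaining elements of $\N$ (including those in $A^-$ and unused ones in $A^+$) in any order to complete a bijection $p\in P$; the estimate on the block is unaffected by what happens after position $n_{i+1}$. What I regard as the main conceptual obstacle is producing the sign-constant pumping set $A^+$: here it is handed to us by finite-dimensionality, and this is precisely the step where the general Banach-space versions in the paper must invoke the Bessaga--Pe\l czy\'nski $c_0$-theorem and the uniformly unconditionally bounded series machinery mentioned in the abstract.
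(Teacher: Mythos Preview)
Your argument is correct. Note first that the paper does not itself prove this statement---it is quoted from \cite{BPW} as motivation---so the natural comparison is with the paper's own Theorem~\ref{dych}, which treats arbitrary Banach spaces under the side hypothesis $\liminf\|x_n\|=0$. There the mechanism is: given a basic cylinder, extend along a fixed unbounded subseries $u$ until the partial sum exceeds $m+1$, and then---because the norm must remain above $m$ throughout an \emph{entire} Talagrand block $I_k$---pad the remaining positions with terms of small norm (this is exactly where $\liminf\|x_n\|=0$ is spent). Your route is genuinely different: finite-dimensionality hands you a functional $f$ and a sign-constant index set $A^+$ along which the $f$-partial sums are monotone nondecreasing, so once the threshold $M$ is crossed it can never be recrossed, and the padding step disappears entirely. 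This is tighter in the finite-dimensional setting---it covers, for instance, $x_n=(-1)^n$ in $\R$, where neither $\liminf\|x_n\|=0$ nor the $\limsup\|x_n\|=\infty$ variant of the Remark after Theorem~\ref{dych} applies---and, as you rightly observe in your last paragraph, it isolates precisely the ingredient (a monotone ``pumping'' direction) that is unavailable in a general Banach space and that the paper must replace by the small-term padding and, ultimately, the Bessaga--Pe{\l}czy{\'n}ski machinery.
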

It is easy to see (analyzing the proof) that the above theorem holds changing coding of subseries from $S$ to $\{0,1\}^{\N}.$

In (\cite{BPW}, Example 3) the authors constructed a series $\sum x_n$ in $c_0$ which is not unconditionally convergent, but $E(Fin,(x_n))=S, F(Fin, (x_n))=P$, then both sets are evidently comeager. (Note that this example works for any Banach space possessing copy of $c_0$ and for any ideal $\I$, since boundedness implies ideal boundedness.) The above facts lead us to the question (\cite{BPW}, Question 4.3): for which (infinite-dimensional) Banach spaces for each series which is not unconditionally convergent both sets $E(\I,(x_n)), F(\I,(x_n))$ are meager (in a case of Baire ideal $\I$)?

\section{Main results}

We will say that a series $\sum x_n$ in a Banach space is uniformly unconditionally bounded if there is $M>0$ such that for each $p \in P$ and $n \in \N$ we have $\left\| \sum_{i=1}^n x_{p(i)}\right\| \leq M.$ 
The following result should, in all likelihood, be known.

\begin{lemma}\label{wr}
Suppose $\sum x_n$ is a series in a Banach space. The following conditions are equivalent:
\begin{enumerate}
\item $E(Fin,(x_n))=S,$
\item there is $M'>0$ s.t. for all $s \in S$ and $n \in \N$ we have $\left\|\sum_{i=1}^{n} x_{s(i)} \right\| \leq M',$
\item $\sum x_n$ is uniformly unconditionally bounded,
\item $F(Fin,(x_n))=P.$
\end{enumerate}
\end{lemma}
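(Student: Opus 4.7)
The plan is to combine a few elementary combinatorial reductions with a Baire category argument on the Polish spaces $S$ and $P$. First, the implications $(2)\Rightarrow(1)$ and $(3)\Rightarrow(4)$ are immediate from the definitions. The equivalence $(2)\Leftrightarrow(3)$ reduces to noting that both are equivalent to the finite-subsets bound $\sup\{\|\sum_{i\in B}x_i\|:B\subset\N\text{ finite}\}<\infty$: for $(3)\Rightarrow(2)$, extend the increasing sequence $s(1)<\dots<s(n)$ to a bijection of $\N$ by appending the missing indices in some order; for $(2)\Rightarrow(3)$, given a bijection $p$ and $n\in\N$, order $\{p(1),\ldots,p(n)\}=\{a_1<\dots<a_n\}$ and append any increasing continuation above $a_n$ to obtain an $s\in S$ with $\sum_{i=1}^n x_{s(i)}=\sum_{i=1}^n x_{p(i)}$.

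The substantial implication is $(1)\Rightarrow(2)$, which I would establish by Baire category. Define $T\colon S\to[0,\infty)$ by $T(s)=\sup_n\|\sum_{i=1}^n x_{s(i)}\|$; since each partial-sum norm depends continuously on $s$, $T$ is lower semicontinuous. By $(1)$, $T$ is finite everywhere, so the Polish space $S$ is the countable union $\bigcup_k T^{-1}([0,k])$ of closed sets, and Baire's theorem supplies $k$ together with a prefix cylinder $U=\{s\in S:s(i)=a_i\text{ for }i\leq m\}$ (with $a_1<\dots<a_m$) on which $T\leq k$. Every finite $B\subset\{a_m+1,a_m+2,\ldots\}$ can be realized as $\{s(m+1),\ldots,s(m+\card B)\}$ for some $s\in U$; evaluating $T(s)\leq k$ yields $\|\sum_{j=1}^m x_{a_j}+\sum_{i\in B}x_i\|\leq k$, hence $\|\sum_{i\in B}x_i\|\leq k+\|\sum_{j=1}^m x_{a_j}\|$. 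An arbitrary finite $B\subset\N$ splits as $(B\cap\{1,\ldots,a_m\})\cup(B\cap\{a_m+1,\ldots\})$, so its norm is bounded by a fixed constant $M'$, establishing $(2)$.

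Finally, $(4)\Rightarrow(3)$ is proved by exactly the same argument transported to $P$: the lower semicontinuous function $p\mapsto\sup_n\|\sum_{i=1}^n x_{p(i)}\|$ is bounded by some $k$ on a cylinder $\{p\in P:p(i)=b_i\text{ for }i\leq m\}$; any finite $B\subset\N\setminus\{b_1,\ldots,b_m\}$ occurs as $\{p(m+1),\ldots,p(m+\card B)\}$ for some bijection $p$ in that cylinder; and the same splitting trick yields a uniform finite-subsets bound. I do not anticipate a serious obstacle beyond verifying that the local bound on a cylinder genuinely propagates to a global bound on all finite partial sums, which relies on the shift-like structure of $S$ and $P$ (any finite set of indices above the cutoff can be realized as a later segment of some extension of the fixed prefix).
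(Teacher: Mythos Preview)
Your argument is correct, but it proceeds along a genuinely different route from the paper. For the substantial implication $(1)\Rightarrow(2)$ the paper gives a direct gliding-hump construction: assuming no uniform bound exists, it splices together longer and longer finite increasing blocks whose partial sums double at each stage, thereby manufacturing a single $s\in S$ with unbounded partial sums. For the remaining nontrivial step the paper proves $(4)\Rightarrow(1)$ (not $(4)\Rightarrow(3)$) by another explicit construction: starting from an unbounded subseries it interleaves its blocks with the missing indices to build an unbounded rearrangement. Your proof instead invokes the Baire category theorem on the Polish spaces $S$ and $P$: the closed decomposition $S=\bigcup_k T^{-1}([0,k])$ forces some level set to contain a cylinder, and the shift structure of $S$ (respectively $P$) lets you realise every finite set of large indices as a segment of an extension of the fixed prefix. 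This is shorter and avoids the inductive bookkeeping; it also dovetails nicely with the paper's own Theorem~\ref{dch}, whose proof uses exactly the same closed decomposition $E(Fin,(x_n))=\bigcup_m A_m$ and shows each $A_m$ is nowhere dense when $E(Fin,(x_n))\neq S$---your argument is essentially the Baire-dual of that result. The paper's approach, on the other hand, is more self-contained: it does not appeal to completeness of $S$ or $P$ and keeps the lemma purely combinatorial.
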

\begin{proof}
"(1) $\Rightarrow$ (2) "

Suppose (1) holds but there is no such $M'$ like in (2). Take an increasing sequence $(s_1,\ldots,s_{k_1})$ such that $\|\sum_{i=1}^{k_1} x_{s_i}\|>0$. By the assumption, a series $\sum_{j=s_{k_1}+1}^{\infty} x_j$ does not satisfy (2), too. Then we can find indices $j_1<j_2<\ldots<j_{k_2}$ with $j_1>s_{k_1}$ such that $\|\sum_{i=1}^{k_2} x_{j_i}\|  \geq 3 \|\sum_{i=1}^{k_1} x_{s_i}\|.$ Since $\|x+y\| \geq |\|x\|-\|y\||,$
then if we put $s_{k_1+1}=j_1,s_{k_1+2}=j_1+1,\ldots,s_{j_{k_2}-k_1+1}=j_{k_2}$ we get $\|\sum_{i=1}^{j_{k_2}-k_1+1} x_{s_i}\| \geq 2\|\sum_{i=1}^{k_1} x_{s_i}\|.$

Proceeding this way we can define $s \in S$ such that there is $u \in S$ with $\|\sum_{i=1}^{u(n)} x_{s(i)}\| \geq 2^n\|\sum_{i=1}^{k_1} x_{s_i}\|$ for each $n \in \N$ which shows that $s \notin E(Fin,(x_n)).$

"(2) $\Rightarrow$ (3) "

Suppose that all subseries have common bound $M'>0$ and fix $p \in P$. Take a sequence $(s_k)_k$ of elements of $S$ with property $\{s_k(1),\ldots,s_k(k)\}=\{p(1),\ldots,p(k)\}$ for each $k \in \N$. Then for all $k \in \N$ we obtain $\|\sum_{i=1}^{k} x_{p(i)}\|=\|\sum_{i=1}^{k} x_{s_k(i)}\| \leq M'.$

"(3) $\Rightarrow$ (4) "
Obvious.

"(4) $\Rightarrow$ (1) "
Suppose (1) does not hold and fix $s \in S$ with unbounded $\sum x_{s(n)}.$ We will define an unbounded rearrangement of $\sum x_n.$ Take $n_1 \in \N$ with $\left\|\sum_{i=1}^{n_1} x_{s(i)}\right\| \geq 1.$ Find $k_1>n_1$ and bijection $p_1 \colon \{1,\ldots,k_1\} \to \{1,\ldots,k_1\}$ extending a sequence $(s(1),\ldots,s(n_1)).$ By the assumption a series $\sum_{i=k_1+1}^{\infty} x_{s(i)}$ is unbounded, too. Then we can pick $n_{2}>k_{1}$ with $\left\|\sum_{i=1}^{k_1} x_{p_{1}(i)}+\sum_{i=k_{1}+1}^{n_2} x_{s(i)}\right\| \geq 2$. Find $k_{2}>n_{2}$ and a bijection $p_2 \colon \{1,\ldots,k_2\} \to \{1,\ldots,k_2\}$ extending $p_1$ and $(s(1),\ldots,s(n_2))$ simultaneusly. Proceeding this concept \textit{ad infinitum} we infer that $p:=\bigcup_{n \in \N} p_n \in P$ satisfy inequality $\left\|\sum_{i=1}^{n_j} x_{p(i)}\right\| \geq j$ for each $j \in \N,$ thereby $p \notin F(Fin,(x_n)).$

\end{proof}

\begin{theorem} \label{dch}
Suppose $\sum x_n$ is a series in a Banach space. If $E(Fin,(x_n)) \neq S$ (equivalently if $F(Fin,(x_n)) \neq P$), then $E(Fin,(x_n)),F(Fin,(x_n))$ are meager in $S$ and $P$, respectively.
\end{theorem}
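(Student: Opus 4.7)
The natural strategy is to express each set as a countable union of closed sets and argue that every piece is nowhere dense. For $M\in\N$ put
$$E_M:=\left\{s\in S\colon \sup_{n}\left\|\sum_{i=1}^n x_{s(i)}\right\|\leq M\right\},\qquad F_M:=\left\{p\in P\colon \sup_{n}\left\|\sum_{i=1}^n x_{p(i)}\right\|\leq M\right\}.$$
Then $E(Fin,(x_n))=\bigcup_{M}E_M$ and $F(Fin,(x_n))=\bigcup_{M}F_M$, and each $E_M$ and $F_M$ is closed, since the $n$-th partial sum depends only on the first $n$ coordinates and convergence in the product topology on $\N^{\N}$ is coordinatewise.

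The hypothesis, combined with Lemma \ref{wr}, says that $\sum x_n$ is not uniformly unconditionally bounded. A first observation is that this failure passes to every tail: for any $N,K\in\N$ there exist $N<b_1<b_2<\dots<b_m$ with $\|\sum_{j=1}^m x_{b_j}\|>K$. Indeed, choose $s\in S$ and $n$ with $\|\sum_{i=1}^n x_{s(i)}\|>K+\sum_{j=1}^{N}\|x_j\|+1$; the indices $i$ with $s(i)\leq N$ contribute at most $\sum_{j=1}^{N}\|x_j\|$ in norm, so deleting them and enumerating the surviving $s(i)$'s in increasing order produces the desired $b_1<\dots<b_m$.

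To show that $E_M$ is nowhere dense, take a basic neighborhood in $S$ given by an initial segment $a_1<\dots<a_k$. Apply the preceding paragraph with $N=a_k$ and $K=M+\|x_{a_1}+\dots+x_{a_k}\|$ to obtain $a_k<b_1<\dots<b_m$ with $\|x_{b_1}+\dots+x_{b_m}\|>K$; the reverse triangle inequality then yields $\|x_{a_1}+\dots+x_{a_k}+x_{b_1}+\dots+x_{b_m}\|>M$. Hence every $s\in S$ starting with $(a_1,\dots,a_k,b_1,\dots,b_m)$ lies outside $E_M$, so the neighborhood determined by this longer segment is a nonempty open subset of the original one disjoint from $E_M$. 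The argument for $F_M$ is essentially identical: since the $b_j$'s all lie above $\max_i a_i$, the concatenation $(a_1,\dots,a_k,b_1,\dots,b_m)$ is injective and extends to a bijection of $\N$ in the obvious way, so the corresponding neighborhood in $P$ is a nonempty open subset of the given one and disjoint from $F_M$.

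The only point that needs care is the tail refinement of Lemma \ref{wr}(2); without it one would merely obtain $E_M\neq S$ and $F_M\neq P$ instead of nowhere density. The remaining ingredients---closedness of $E_M$ and $F_M$, and extendability of a finite injective sequence to a permutation of $\N$---are routine.
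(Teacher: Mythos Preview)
Your proof is correct and follows the same overall scheme as the paper: write $E(Fin,(x_n))$ and $F(Fin,(x_n))$ as countable unions of the closed sets $E_M$, $F_M$ and show each piece is nowhere dense. The only difference is in the nowhere-density step. The paper fixes a single witness $s'\in S\setminus E(Fin,(x_n))$ (respectively $p'\in P\setminus F(Fin,(x_n))$) with unbounded partial sums and, given a basic open set, splices the tail of $s'$ (respectively of $p'$) onto the prescribed initial segment to produce a point outside $A_m$ (respectively $D_m$). You instead distill from Lemma~\ref{wr} a tail version of the failure of uniform boundedness and use it to find a finite extension of the initial segment already forcing a large partial sum, which immediately gives a sub-basic open set disjoint from $E_M$ or $F_M$. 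Your route is a bit cleaner in the permutation case: the paper carries out an infinite back-and-forth to manufacture an explicit $p\in V\setminus D_m$, whereas your finite extension plus the remark that any injective tuple extends to a bijection already suffices.
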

\begin{proof}
Suppose there is $s' \in S \setminus E(Fin,(x_n)).$ Write $E(Fin,(x_n))=\bigcup_{m \in \N} A_m,$ where $A_m=\{s \in S \colon \left\|\sum_{i=1}^n x_{s(i)}\right\| \leq m \mbox{ for all } n \in \N\}$ for $m \in \N$. Fix $m \in \N$. 

We will show that $A_m$ is closed. Indeed, take a sequence $(s_n)_n \in A_m$ convergent to some $s \in S$. Fix $n \in \N.$ Take $k_0 \in \N$ such that for all $k \geq k_0$ and $i \leq n$ $s_k(i)=s(i)$. Then $\left\|\sum_{i=1}^n x_{s(i)}\right\|=\left\|\sum_{i=1}^n x_{s_k(i)}\right\| \leq m$ since $s_k \in A_m.$ Consequently, $s \in A_m.$

Now, we will show that $A_m$ has an empty interior, which suffices to infer that $A_m$ is nowhere dense. Consider basic open set 
$$U:=\{s \in S \colon s \mbox{ extends } (s_1,\ldots,s_k)\}$$
for some increasing sequence $(s_1,\ldots,s_k)$ of positive integers. Find the smallest $l \in \N$ with $s'(l)>s_k$ and define $u:=(s_1,\ldots,s_k,s'(l),s'(l+1),s'(l+2),\ldots).$ Then $u \in U \setminus A_m,$ which proves that no open set is contained in $A_m.$

For the second part, suppose there is $p' \in P \setminus F(Fin,(x_n))$ and write $F(Fin,(x_n))=\bigcup_{m \in \N} D_m,$ where $D_m=\{p \in P \colon \left\|\sum_{i=1}^n x_{p(i)}\right\| \leq m \mbox{ for all } n \in \N\}$ for $m \in \N$. Similar calculations as above show that all $D_m$'s are closed. Fix $m \in \N$.

We will check that $D_m$ is nowhere dense. Consider basic open set
$$V:=\{p \in P \colon p \mbox{ extends } (p_1,\ldots,p_k)\}$$
for some injective sequence $(p_1,\ldots,p_k)$ of positive integers. Pick the smallest $l_1 \in \N$ such that $p'[\{1,\ldots,l_1-1\}] \supset \{p_1,\ldots,p_k\}$ and $j_1>l_1$ with $\|x_{p_1}+\ldots+x_{p_k}+x_{p'(l_1)}+\ldots+x_{p'(j_1)}\|>m$. Next, extend a sequence $(p_1,\ldots,p_k,p'(l_1),\ldots,p'(j_1))$ to bijection $p_1 \colon \{1,\ldots,k_1\} \to \{1,\ldots,k_1\}$ for some $k_1 \in \N$. Now, pick the smallest $l_2 \in \N$ such that $p'[\{1,\ldots,l_2-1\}] \supset \{1,\ldots,k_1\}$ and $j_2>l_2$ with $\|x_{{p_1}_1}+\ldots+x_{{p_1}_{k_1}}+x_{p'(l_2)} + \ldots+x_{p'(j_2)}\|>m$. Again, extend a sequence $({p_1}_{1},\ldots,{p_1}_{k_1},p'(l_2),\ldots,p'(j_2))$ to a bijection $p_2 \colon \{1,\ldots,k_2\} \to \{1,\ldots,k_2\}$ for some $k_2 \in \N.$ This inductive procedure produces $p \in V \setminus D_m.$
\end{proof}

Now, we will move to the case of ideal boundedness.

\begin{theorem}\label{dych}
Suppose $\sum x_n$ is a series in a Banach space with $\liminf \|x_n\|=0$ and $\I$ is an ideal with the Baire property. If $E(Fin,(x_n)) \neq S$ (equivalently if $F(Fin,(x_n)) \neq P$), then $E(\I,(x_n)),F(\I,(x_n))$ are meager in $S$ and $P$, respectively.
\end{theorem}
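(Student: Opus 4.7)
My plan is to combine Talagrand's characterisation (Lemma~\ref{Tal}) with the two new hypotheses: $E(Fin,(x_n))\neq S$, which by Lemma~\ref{wr} means $\sum x_n$ is not uniformly unconditionally bounded, and $\liminf\|x_n\|=0$, which will let me pad finite sums with arbitrarily small terms. First I would apply Lemma~\ref{Tal} to $\I$ to fix an increasing sequence $n_1<n_2<\dots$ such that no member of $\I$ contains infinitely many of the intervals $I_i := [n_i, n_{i+1})\cap\N$, and decompose $E(\I,(x_n)) = \bigcup_{m\in\N} E_m$ where $E_m := \{s\in S \colon \{n \colon \|\sum_{j=1}^n x_{s(j)}\| > m\}\in\I\}$; the set $F(\I,(x_n))$ decomposes analogously. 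It suffices to show each $E_m$ is meager.

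For fixed $m$ and $i$, the set $G_i^{(m)} := \{s\in S\colon \exists\, n\in I_i,\ \|\sum_{j=1}^n x_{s(j)}\|\leq m\}$ depends only on $s(1),\ldots,s(n_{i+1}-1)$, hence is clopen in $S$. The defining property of $(n_i)$ gives $E_m \subset \bigcup_k \bigcap_{i\geq k} G_i^{(m)}$, so the problem reduces to showing that for every $k$ the open set $\bigcup_{i\geq k}(S \setminus G_i^{(m)})$ is dense in $S$: its intersection over $k$ is then a dense $G_\delta$ disjoint from $E_m$, making $E_m$ meager.

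The heart of the argument will be this density claim. Given a basic open $U\subset S$ determined by $(s_1,\ldots,s_r)$, I set $c := \|\sum_{j=1}^r x_{s_j}\|$. The hypothesis $E(Fin,(x_n))\neq S$ passes to every tail (since a uniform bound on subseries of $(x_n)_{n>s_r}$ would transfer, via the constant $\sum_{j\leq s_r}\|x_j\|$, to a uniform bound on all subseries of $\sum x_n$), so Lemma~\ref{wr} supplies indices $s_r < u_1 < \ldots < u_p$ with $\|\sum_{j=1}^p x_{u_j}\| > 2m+c$; the reverse triangle inequality then yields $\|\sum_{j=1}^r x_{s_j}+\sum_{j=1}^p x_{u_j}\| > 2m$. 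Next I would pick $i_0\geq k$ with $n_{i_0}>r+p$, set $L := n_{i_0+1}-1-r-p \geq 1$, and use $\liminf\|x_n\|=0$ to select $u_p<q_1<\ldots<q_L$ with $\|x_{q_j}\|<m/(2L)$. Any extension of $(s_1,\ldots,s_r,u_1,\ldots,u_p,q_1,\ldots,q_L)$ to $s\in S$ lies in $U$, and for every $n\in I_{i_0}$ the partial sum contains the entire big block plus at most $L$ padding terms, so $\|\sum_{j=1}^n x_{s(j)}\| > 2m - L\cdot m/(2L) = 3m/2 > m$; hence $s\notin G_{i_0}^{(m)}$, as needed.

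For $F(\I,(x_n))$ I would run the same construction with injective tuples in place of increasing ones, and complete the finite injection $(p_1,\ldots,p_r,u_1,\ldots,u_p,q_1,\ldots,q_L)$ to a bijection of $\N$ by a standard back-and-forth from position $n_{i_0+1}$ onwards. The main technical obstacle I expect is precisely this density step: the parameters must be fixed in the right order (first $p$ from non-uniform-boundedness, then $i_0$ large enough that $n_{i_0}>r+p$, finally $L$ padding terms via $\liminf\|x_n\|=0$) so that the big block sits strictly below position $n_{i_0}$ and the padding remains too small to destroy the lower bound $>m$ uniformly across the interval $I_{i_0}$.
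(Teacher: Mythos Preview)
Your proposal is correct and follows essentially the same approach as the paper: fix Talagrand intervals $I_i$, reduce meagerness of $E(\I,(x_n))$ to the density of sets of the form $\{s:\forall n\in I_i,\ \|\sum_{j\le n}x_{s(j)}\|>m\}$ for suitable $i$, and establish that density by first appending a ``big block'' (using that non--uniform--unconditional--boundedness passes to tails) and then padding with small-norm terms supplied by $\liminf\|x_n\|=0$. Your write-up is in fact slightly cleaner than the paper's (you make the clopen/$G_\delta$ structure explicit and avoid the paper's implicit assumption that the fixed unbounded subseries $u$ satisfies $u(r+1)>s_r$), but the strategy and all key ideas coincide.
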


\begin{proof}
Let $(n_k)_{k \in \N}$ be a sequence from Talagrand's characterisation and $I_k:=[n_k,\ldots,n_{k+1}-1) \cap \N$.
Take $u \in S$ such that $\sum x_{u(n)}$ is unbounded. 

For each $m \in \N$ define 
$$B_m:=\left\{s \in S \colon \exists_{k>m} \forall_{l \in I_k} \left\| \sum_{i=1}^l x_{s(i)}\right\|>m \right\}.$$
Note that $\bigcap_{m \in \N} B_m \subset S \setminus E(\I,(x_n)).$ Indeed, take any $M>0$ and take any $m_1 \in \N$ with $m_1>M$. Since $s \in B_{m_1}$ then there is $k(m_1)>m_1$ with  $\left\| \sum_{i=1}^l x_{s(i)}\right\|>m_1$ for all $l \in I_{k(m_1)}.$ Take any integer $m_2>n_{k(m_1)+1}-1$ and note that since $s \in B_{m_2}$ then we have $\left\| \sum_{i=1}^l x_{s(i)}\right\|>m_2>m_1$ for all $l \in I_{k(m_2)}.$
Such procedure generates an increasing sequence $(m_j)_{j \in \N}$ of positive integers with $\left\| \sum_{i=1}^l x_{s(i)}\right\|>m_j>m_1$ for all $l \in I_{k(m_j)}$ and $j \in \N$ which shows that $\left\{n \in \N \colon \left\| \sum_{i=1}^l x_{s(i)}\right\|>m_1>M \right\}$ contains infinitely many intervals of the form $I_k.$ Therefore since $M>0$ is arbitrary we infer that (thanks to Lemma \ref{Tal}) $s \notin E(\I,(x_n)).$

Below we will show that each $B_m$ contains an open dense subset of $S$. Thereby, since $S$ is compleletely metrisable we get that $\bigcup_{m \in \N} B_m$ is comeager, and finally $E(\I,(x_n))$ is meager.

Fix $m \in \N$ and basic open set $U$ in $S$:
$$U=\left\{w \in S \colon w \textrm{ extends } (s_1,\ldots,s_r) \right\}$$
for some increasing sequence $(s_1,\ldots,s_r)$ of positive integers, where $r>m.$ Since $\sum_{n=1}^{\infty} x_{u(n)}$ is unbounded then $\sum_{i=r+1}^{\infty} x_{u(i)}$ is unbounded, too. Hence, we can pick $l_r>r$ with $$\left\|\sum_{i=1}^r x_{s_i}+\sum_{i=r+1}^{l_r} x_{u(i)}\right\|>m+1.$$ Find the smallest $k \in \N$ with $n_k>l_r$ and $\sum_{i=l_r+1}^{n_{k+1}-1} \|x_{v_i}\|<1$ for some increasing $(v_{l_r+1},\ldots,v_{n_{k+1}-1})$ such that $v_{l_r+1}>u(l_r)$ (it is possible since $\liminf \|x_n\|=0$). Then 
$$\left\|\sum_{i=1}^r x_{s_i}+\sum_{i=r+1}^{l_r} x_{u(i)}+\sum_{i=l_r+1}^{j} x_{v_i}\right\| \geq \left\|\sum_{i=1}^r x_{s_i}+\sum_{i=r+1}^{l_r} x_{u(i)}\right\|-\left\|\sum_{i=l_r+1}^{j} x_{v_i}\right\|>m+1-1=m$$ for each $j \in I_k.$
Put $$V_U:=\left\{w \in S \colon w \textrm{ extends } (s_1,\ldots,s_r,u(r+1),\ldots,u(l_r),v_{l_r+1},\ldots,v_{n_{k+1}-1})\right\} \subset U \cap B_m$$ and note that $\bigcup_{U \in \mathcal{B}}V_U$ is a dense (in $S$) open subset of $B_m$, where $\mathcal{B}$ is a base of topology in $S$ consisting of sets of the form $\left\{w \in S \colon w \textrm{ extends } (z_1,\ldots,z_r) \right\}$ for some finite increasing sequence $(z_1,\ldots,z_r).$

Now we will discuss the case of rearrangements. One can show that $\bigcap_{m \in \N} C_m \subset P \setminus F(\I,(x_n))$, where 
$$C_m:=\left\{p \in P \colon \exists_{k>m} \forall_{l \in I_k} \left\| \sum_{i=1}^l x_{p(i)}\right\|>m \right\}$$
for each $m \in \N.$ Now, we fix $m \in \N$ and we will check that $C_m$ contains an open basic set. Consider
$$U=\left\{w \in P \colon w \textrm{ extends } (p_1,\ldots,p_r) \right\}$$
for some $r>m.$ Since $E(Fin,(x_n)) \neq S$, then there is an unbounded rearrangement (see Lemma \ref{wr}) $\sum_{i=1}^{\infty} x_{t(i)}$ of $\sum_{i=1}^{\infty} x_i$ ($t \in P$). Put $z:=\max\{r,\max\{p_1,\ldots,p_r\}\}$ and note that $\sum_{i=z+1}^{\infty} x_{t(i)}$ is unbounded, too. Find $m_r>z$ with $\|\sum_{i=1}^r x_{p_i}+\sum_{i=z+1}^{m_r} x_{t(i)}\|>m+1.$ Pick the smallest $k \in \N$ with $n_k>r+m_r-z$ and find an increasing sequence $(v_{m_r+1},\ldots,v_{n_{k+1}-1-r+z})$ with $v_{m_r+1}>\max\{z,m_r,\max\{t(z+1),\ldots,t(m_r)\}\}$ and $\sum_{i=m_r+1}^{n_{k+1}-1-r+z}\|x_{v_i}\|<1.$ Then 
$$\left\|\sum_{i=1}^r x_{p_i}+\sum_{i=z+1}^{m_r} x_{t(i)}+\sum_{i=m_r+1}^{j-r+z} x_{v_i}\right\|>m$$
for each $j \in I_k.$ Then 
$$V_U:=\left\{w \in P \colon w \textrm{ extends } (p_1,\ldots,p_r,t(z+1),\ldots,t(m_r),v_{m_r+1},\ldots,v_{n_{k+1}-1-r-z})\right\} \subset U \cap C_m.$$ The conclusion is similar as the final reasoning for case of subseries.
\end{proof}

\begin{remark}
In Theorem \ref{dych} we can change the assumption $\liminf \|x_n\|=0$ to $\limsup \|x_n\|=\infty$, with the same thesis. It follows from the fact that in this case one can construct subseries $\sum_{n=1}^{\infty} x_{u(n)}$ of a series $\sum x_n$ that both sequences $(\|x_{u(n)}\|)_n,(\|\sum_{i=1}^{n} x_{u(i)}\|)_n$ tends increasingly to $\infty.$
\end{remark}

\begin{remark} \label{bw}
In a case of subseries determined by $0-1$ sequences we can omit condition $\liminf \|x_n\|=0.$
\end{remark}

Recall that Bessaga-Pe{\l}czy{\'n}ski $c_0-$Theorem (\cite{Kad},Theorem 6.4.1, p.85) states that a Banach space $X$ contains no isomorphic copy of $c_0$ iff every weakly absolutely convergent series is unconditionally convergent [iff every weakly absolutely convergent series is convergent].

\begin{lemma} \label{wac} (\cite{JD},Theorem 6, p.44 ($1. \Leftrightarrow 4.$))
A series $\sum x_n$ in a Banach space $X$ is weakly absolutely convergenct iff there is $C>0$ such that for each $n \in \N$ and finite sequence $t \colon \{1,\ldots, n\} \to \{-1,0,1\}$ we have $\|\sum_{i=1}^n t(i)x_i\| \leq C$.
\end{lemma}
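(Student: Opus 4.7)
The plan is to prove the two implications separately. The reverse direction $(\Leftarrow)$ is elementary: assuming a uniform bound $C$ on all signed partial sums, fix $x^*\in X^*$ and for each $n$ set $t(i):=\on{sgn}(x^*(x_i))\in\{-1,0,1\}$ so that $t(i)x^*(x_i)=|x^*(x_i)|$. Then
$$\sum_{i=1}^n |x^*(x_i)| \;=\; x^*\!\left(\sum_{i=1}^n t(i)x_i\right) \;\le\; \|x^*\|\,C,$$
and letting $n\to\infty$ yields $\sum_n |x^*(x_n)|<\infty$, i.e.\ weak absolute convergence.

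For the forward direction $(\Rightarrow)$, the approach is to invoke Banach's closed graph theorem. Define $T\colon X^*\to\ell^1$ by $T(x^*):=(x^*(x_n))_{n\in\N}$; the hypothesis of weak absolute convergence is exactly what guarantees that $T$ takes values in $\ell^1$. Linearity is obvious. To verify the graph is closed, suppose $x^*_k\to x^*$ in $X^*$ and $Tx^*_k\to y$ in $\ell^1$: for each fixed $n$, norm convergence in $X^*$ gives $x^*_k(x_n)\to x^*(x_n)$, while $\ell^1$-convergence dominates coordinate convergence and so gives $x^*_k(x_n)\to y_n$; hence $y_n=x^*(x_n)$ and $Tx^*=y$. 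The closed graph theorem then produces a constant $K>0$ with $\|Tx^*\|_1\le K\|x^*\|$ for every $x^*\in X^*$.

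To finish, given any finite sign sequence $t\colon\{1,\dots,n\}\to\{-1,0,1\}$, use Hahn-Banach to pick $x^*\in X^*$ of norm one with $x^*\bigl(\sum_{i=1}^n t(i)x_i\bigr)=\bigl\|\sum_{i=1}^n t(i)x_i\bigr\|$. Then
$$\left\|\sum_{i=1}^n t(i)x_i\right\| \;=\; \sum_{i=1}^n t(i)x^*(x_i) \;\le\; \sum_{i=1}^n |x^*(x_i)| \;\le\; \|Tx^*\|_1 \;\le\; K,$$
so $C:=K$ works uniformly in $n$ and $t$. The only genuine obstacle is recognising that the closed graph theorem is the right tool; once $T$ is in place, both the graph check and the extraction of the uniform bound via duality are routine.
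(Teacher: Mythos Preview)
Your argument is correct in both directions. The reverse implication is the standard sign-choice trick, and for the forward implication the closed-graph argument for $T\colon X^*\to\ell^1$, $x^*\mapsto (x^*(x_n))_n$, is exactly the right mechanism; the extraction of the uniform bound via a norming functional is clean.

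Note, however, that the paper does not supply its own proof of this lemma: it is quoted verbatim from Diestel (\cite{JD}, Theorem~6, p.~44) without argument. Your proof is essentially the classical one found there. An equivalent route, also standard, is to apply the uniform boundedness principle directly in $X^{**}$: the vectors $z_{F,\varepsilon}=\sum_{i\in F}\varepsilon_i x_i$ (over finite $F\subset\N$ and signs $\varepsilon$) act as functionals on $X^*$, and weak absolute convergence says this family is pointwise bounded; Banach--Steinhaus then gives the uniform norm bound. The closed-graph and Banach--Steinhaus approaches are interchangeable here and yield the same constant up to relabelling, so there is no substantive difference to flag.
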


\begin{theorem} 
Suppose that $\sum x_n$ is not unconditionally convergent series in a Banach space containing no copy of $c_0$. Then $E(Fin,(x_n)), F(Fin,(x_n))$ are meager.
\end{theorem}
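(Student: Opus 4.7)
The plan is to argue by contradiction and combine the machinery assembled in the paper: Theorem \ref{dch} reduces the meagerness conclusion to the statement that $E(Fin,(x_n))\neq S$, so it suffices to exhibit a single unbounded subseries of $\sum x_n$.

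Concretely, I would assume toward contradiction that $E(Fin,(x_n)) = S$. Lemma \ref{wr} then yields a constant $M'>0$ with $\left\|\sum_{i=1}^n x_{s(i)}\right\|\leq M'$ for every $s\in S$ and every $n\in\N$. From this I would derive the hypothesis of Lemma \ref{wac}: given any finite sign sequence $t\colon\{1,\dots,n\}\to\{-1,0,1\}$, set $A=\{i\leq n : t(i)=1\}$ and $B=\{i\leq n : t(i)=-1\}$, and split
\[
\sum_{i=1}^n t(i)x_i \;=\; \sum_{i\in A} x_i \;-\; \sum_{i\in B} x_i.
\]
Each finite set $A,B\subset\N$ can be realized as the first $|A|$ (resp.\ $|B|$) values of some strictly increasing $s\in S$ (just enumerate the set in order and then continue arbitrarily through $\N$), so both sums on the right have norm at most $M'$. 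Hence $\left\|\sum_{i=1}^n t(i)x_i\right\|\leq 2M'$, and by Lemma \ref{wac} the series $\sum x_n$ is weakly absolutely convergent.

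Now I would invoke the Bessaga--Pe{\l}czy\'nski $c_0$-Theorem stated just before the theorem: since $X$ contains no isomorphic copy of $c_0$, every weakly absolutely convergent series in $X$ is unconditionally convergent. Thus $\sum x_n$ is unconditionally convergent, contradicting the hypothesis of the theorem. Therefore $E(Fin,(x_n))\neq S$ (equivalently $F(Fin,(x_n))\neq P$), and Theorem \ref{dch} directly gives that both $E(Fin,(x_n))$ and $F(Fin,(x_n))$ are meager in $S$ and $P$ respectively.

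There is essentially no technical obstacle beyond lining up the definitions: each step is furnished by a named result recalled earlier in the paper. The only minor point worth double-checking is that finite initial segments of subseries (indexed by finite subsets $A\subset\N$) are genuinely controlled by the constant $M'$ coming from Lemma \ref{wr}, which is precisely the observation that any finite strictly increasing tuple extends to an element of $S$; this is the only place where the passage from infinite increasing sequences to arbitrary finite signed combinations needs to be carried out carefully.
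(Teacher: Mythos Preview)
Your proof is correct and follows essentially the same route as the paper: assume the failure of meagerness, use Theorem \ref{dch} and Lemma \ref{wr} to get a uniform bound on all finite subseries, split an arbitrary $\{-1,0,1\}$-sum into its positive and negative parts to verify the hypothesis of Lemma \ref{wac}, and then apply the Bessaga--Pe{\l}czy\'nski $c_0$-Theorem to reach a contradiction. The only cosmetic difference is that the paper phrases the contradiction as ``nonmeager $\Rightarrow$ unconditionally convergent'' while you phrase it as ``$E(Fin,(x_n))=S \Rightarrow$ unconditionally convergent,'' which amounts to the same thing via Theorem \ref{dch}.
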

\begin{proof}
Suppose $E(Fin,(x_n))$ or $F(Fin,(x_n))$ is nonmeager. Then by Theorem \ref{dch} $E(Fin,(x_n))=S, F(Fin,(x_n))=P$ and by Lemma \ref{wr} all subseries and rearrangements are bounded by a common constant, say $M>0$. Note that for each sequence $t \colon \{1,\ldots, n\} \to \{-1,0,1\}$ we can write $\|\sum_{i=1}^n t(i)x_i\|=\|\sum_{i=1}^n \chi_{A}(i)x_i+\sum_{i=1}^n \chi_{B}(i)t(i)x_i\| \leq \|\sum_{i=1}^n \chi_{A}(i)x_i\| + \|\sum_{i=1}^n -\chi_{B}(i)t(i)x_i\| \leq 2M$, if we put $A:=t^{-1}[\{1\}], B:=\{1,\ldots,n\} \setminus A$, because $\chi_{A}, -\chi_{B}t \colon \{1,\ldots,n\} \to \{0,1\}.$ It follows from Lemma \ref{wac} that $\sum x_n$ is weakly absolutely convergent and by Bessaga-Pe{\l}czy{\'n}ski $c_0-$Theorem $\sum x_n$ is unconditionally convergent.
\end{proof}

Using an above fact and Example 3. in \cite{BPW} we get following result. 
\begin{corollary} \label{wn}
A Banach space contains no copy of $c_0$ if and only if for each series which is not unconditionally convergent both sets $E(Fin,(x_n)), F(Fin,(x_n))$ are meager in $S,P$, respectively.
\end{corollary}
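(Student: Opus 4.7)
The plan is to establish the biconditional in Corollary~\ref{wn} by combining the immediately preceding theorem (for the ``only if'' direction) with a transfer argument based on Example~3 of \cite{BPW} (for the ``if'' direction). The forward implication is essentially immediate: if $X$ contains no isomorphic copy of $c_0$, then applying the preceding theorem to any non-unconditionally convergent series $\sum x_n$ in $X$ yields meagerness of both $E(Fin,(x_n))$ and $F(Fin,(x_n))$.

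For the reverse implication I would argue by contrapositive. Suppose $X$ contains a closed subspace isomorphic to $c_0$, fixed via an isomorphic embedding $T \colon c_0 \to X$. Let $\sum y_n$ be the series in $c_0$ from Example~3 of \cite{BPW}, which is not unconditionally convergent yet satisfies $E(Fin,(y_n)) = S$ and $F(Fin,(y_n)) = P$. Define $x_n := T(y_n)$ and verify two things: (a) $\sum x_n$ is not unconditionally convergent in $X$, for otherwise pulling back along $T^{-1}$ on the closed subspace $T(c_0)$ would produce unconditional convergence of $\sum y_n$ in $c_0$; (b) the partial sums of every subseries and every rearrangement of $\sum x_n$ are uniformly bounded by $\|T\|$ times the common bound for $\sum y_n$ (the latter exists by Lemma~\ref{wr} applied to $\sum y_n$). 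Invoking Lemma~\ref{wr} on $\sum x_n$ then yields $E(Fin,(x_n)) = S$ and $F(Fin,(x_n)) = P$, which as the ambient Polish spaces are trivially nonmeager.

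The argument is largely bookkeeping and I do not anticipate a substantive obstacle. The only point worth stating explicitly is that an isomorphic embedding of $c_0$ into $X$ transports the relevant series-theoretic notions---unconditional convergence, $Fin$-boundedness of partial sums of subseries and rearrangements, and uniform unconditional boundedness---between $c_0$ and its image in $X$; this is immediate from the two-sided norm inequalities furnished by $T$ and $T^{-1}$, and ensures that Example~3 of \cite{BPW} yields a witness inside $X$ rather than merely inside $c_0$.
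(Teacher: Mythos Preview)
Your proposal is correct and follows essentially the same approach as the paper: the forward implication is the preceding theorem, and the reverse implication is the contrapositive via Example~3 of \cite{BPW}, transferred from $c_0$ into $X$ along an isomorphic embedding (which the paper notes in the Introduction ``works for any Banach space possessing a copy of $c_0$''). You spell out the transfer argument in slightly more detail than the paper does, but the underlying idea is the same.
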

Note that the fact that $E(Fin,(x_n))$ is meager or a series $\sum x_n$ is weakly unconditionally convergent was proven (\cite{Lef},Corollary 2.2) using theory of summability methods of a series.

\begin{theorem}
Suppose $\I$ is an ideal with the Baire property. A Banach space $X$ contains no copy of $c_0$ if and only if for each series $\sum x_n$ in $X$ with $\liminf \|x_n\|=0$ which is not unconditionally convergent both sets $E(\I,(x_n)),F(\I,(x_n))$ are meager in $S,P$, respectively.
\end{theorem}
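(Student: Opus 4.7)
The plan is to chain together results already established in the paper: Theorem \ref{dych} supplies the meagreness conclusion once the Baire-property and $\liminf$ hypotheses are in place, Corollary \ref{wn} verifies the key hypothesis $E(Fin,(x_n))\neq S$ from the absence of a copy of $c_0$, and a short explicit construction handles the converse.

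For the direction assuming $X$ contains no copy of $c_0$, I take any series $\sum x_n$ in $X$ with $\liminf \|x_n\| = 0$ that is not unconditionally convergent. Corollary \ref{wn} makes $E(Fin,(x_n))$ meager in the Polish space $S$, and in particular $E(Fin,(x_n)) \neq S$. The hypotheses of Theorem \ref{dych} are now all met (the Baire property of $\I$ is given, $\liminf \|x_n\| = 0$ is assumed, and $E(Fin,(x_n))\neq S$), which yields the meagreness of both $E(\I,(x_n))$ and $F(\I,(x_n))$.

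For the converse, I argue the contrapositive: supposing $X$ contains an isomorphic copy of $c_0$, I must exhibit a series $\sum x_n$ in $X$ with $\liminf \|x_n\| = 0$, not unconditionally convergent, for which $E(\I,(x_n))$ or $F(\I,(x_n))$ fails to be meager. Fix an isomorphism $T$ from $c_0$ onto a closed subspace of $X$, let $(e_k)$ denote the canonical basis of $c_0$, and define
\[
x_n = \begin{cases} T(e_k) & \text{if } n = 2^k \text{ for some } k \in \N, \\ 0 & \text{otherwise.} \end{cases}
\]
Since infinitely many terms vanish, $\liminf \|x_n\| = 0$. For any increasing $s \in S$ or bijection $p \in P$, the partial sum $\sum_{i=1}^{n} x_{s(i)}$ (respectively $\sum_{i=1}^{n} x_{p(i)}$) equals $T\bigl(\sum_{k \in J} e_k\bigr)$ for some finite $J \subset \N$, and $\|\sum_{k \in J} e_k\|_{c_0} \leq 1$ furnishes the uniform bound $\|T\|$. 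Hence $E(Fin,(x_n)) = S$ and $F(Fin,(x_n)) = P$, and since $Fin \subset \I$ ordinary boundedness forces $\I$-boundedness, giving $E(\I,(x_n)) = S$ and $F(\I,(x_n)) = P$—the full Polish spaces, visibly nonmeager. Finally, $\sum x_n$ does not converge in $X$ (otherwise applying $T^{-1}$ would converge $\sum_k e_k$ in $c_0$), hence it is certainly not unconditionally convergent.

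The substantive work has already been done in Corollary \ref{wn} and Theorem \ref{dych}; the only delicate point is honoring the $\liminf \|x_n\| = 0$ clause in the theorem when constructing the converse witness. The natural $c_0$-counterexample $x_n = T(e_n)$ has $\liminf \|x_n\| > 0$, so padding with zeros at non-dyadic indices is the minimal adjustment that preserves uniform boundedness of all subseries and rearrangements while driving $\liminf \|x_n\|$ to zero.
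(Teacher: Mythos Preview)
Your proof is correct and follows essentially the same route as the paper: the forward direction chains Corollary~\ref{wn} with Theorem~\ref{dych} (the paper phrases this direction contrapositively, but the ingredients are identical), and the converse invokes the standard $c_0$ example. Your explicit construction for the converse is in fact more careful than the paper's bare citation of Example~3 in \cite{BPW}: by padding with zeros at non-dyadic indices you honor the hypothesis $\liminf\|x_n\|=0$, a point the paper's proof does not address.
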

\begin{proof}
If Banach space $X$ contains a copy of $c_0$ then we use a construction from Example 3. in \cite{BPW}. Now, suppose that $X$ contains no copy of $c_0$ and $E(\I,(x_n))$ or $F(\I,(x_n))$ is nonmeager. Suppose that $E(Fin,(x_n)) \neq S$. Then by Lemma \ref{wr} and Theorem \ref{dych} both sets $E(\I,(x_n)), F(\I,(x_n))$ are meager, contradiction. Then $E(Fin,(x_n))=S$. By Lemma \ref{wr} $F(Fin,(x_n))=P$ (hence obviously $E(\I,(x_n))=S, F(\I,(x_n))=P.$). Corollary \ref{wn} implies that $\sum x_n$ is conditionally convergent.
\end{proof}

Define $A(\I,(x_n))=\left\{t \in \{0,1\}^{\N} \colon \left(\sum_{i=1}^n t(i)x_{i}\right)_n \textrm{ is } \I-\textrm{bounded} \right\}.$ In a similar way as above, thanks to Remark \ref{bw}, we get following characterisation.
\begin{corollary}
Suppose $\I$ is an ideal with the Baire property. A Banach space $X$ contains no copy of $c_0$ if and only if for each series $\sum x_n$ in $X$ which is not unconditionally convergent the set $A(\I,(x_n))$ is meager in $\{0,1\}^{\N}$.
\end{corollary}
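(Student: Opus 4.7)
The plan is to mimic the proof of the preceding theorem verbatim, working with the coding space $\{0,1\}^{\N}$ in place of $S$ and substituting Remark \ref{bw} for Theorem \ref{dych} so that the hypothesis $\liminf \|x_n\| = 0$ is no longer needed.

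For the direction assuming $X$ contains a copy of $c_0$, I would transfer the construction from Example 3 of \cite{BPW} to produce a non-unconditionally convergent series $\sum x_n$ in $X$ whose partial sums $\|\sum_{i=1}^n t(i) x_i\|$ are uniformly norm-bounded over all $t \in \{0,1\}^{\N}$ and $n \in \N$; since boundedness implies $\I$-boundedness, $A(\I,(x_n)) = \{0,1\}^{\N}$, which is certainly not meager. For the converse, I would assume $X$ has no copy of $c_0$, fix a series $\sum x_n$ that is not unconditionally convergent, and suppose for contradiction that $A(\I,(x_n))$ is non-meager. Two cases appear, depending on whether $E(Fin,(x_n)) = S$. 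If not, the $0$-$1$ version of Theorem \ref{dych} promised by Remark \ref{bw} forces $A(\I,(x_n))$ to be meager, a contradiction. If so, Lemma \ref{wr} supplies a uniform bound $M' > 0$ on $\|\sum_{i=1}^n x_{s(i)}\|$ over all $s \in S$ and $n \in \N$; splitting an arbitrary sign vector $t \colon \{1,\ldots,n\} \to \{-1,0,1\}$ into its positive support $A$ and negative support $B$ and applying the triangle inequality gives $\|\sum_{i=1}^n t(i) x_i\| \leq 2M'$. Lemma \ref{wac} then yields weak absolute convergence, and the Bessaga--Pe{\l}czy{\'n}ski $c_0$-Theorem forces unconditional convergence of $\sum x_n$, contradicting its choice.

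The one step that truly deserves scrutiny is the $0$-$1$ analogue of Theorem \ref{dych} alluded to in Remark \ref{bw}. In the original proof the auxiliary increasing sequence $(v_{l_r+1},\ldots,v_{n_{k+1}-1})$ with $\sum \|x_{v_i}\| < 1$ was needed because elements of $S$ must be strictly increasing, so one cannot simply stop inserting new terms inside the interval $I_k$; this is precisely where $\liminf \|x_n\| = 0$ was used. In the $\{0,1\}^{\N}$ coding, however, one may set the corresponding bits to $0$, leaving the partial sum unchanged on $I_k$ without any smallness requirement. This is the sole substantive departure from the template proof, and I do not anticipate any further obstacle; the remainder of the argument is a direct transfer from the preceding theorem.
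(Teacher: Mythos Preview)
Your proposal is correct and follows exactly the approach the paper intends: the paper's own proof is the single sentence ``In a similar way as above, thanks to Remark \ref{bw}, we get following characterisation,'' and you have faithfully unpacked what that sentence means, including the key observation that in the $\{0,1\}^{\N}$ coding one may pad with zero bits instead of small-norm terms, which is precisely the content of Remark \ref{bw}.
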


\noindent{\bf Acknowledgement.} I would like to thank Gilles Godefroy for fruitful conversation.

\end{document}